\newtheorem{thm}{Theorem}[section]
\newtheorem{rmk}[thm]{Remark}
\newtheorem{lem}[thm]{Lemma}
\newtheorem{prop}[thm]{Proposition}
\newtheorem{cor}[thm]{Corollary}
\begin{document}
\title{A finiteness result on representations of Nori's fundamental group scheme}
\author{Xiaodong Yi\footnote{yixd97@outlook.com}}
\date{}
\maketitle
\begin{abstract}
Let $(X,x)$ be a pointed geometrically connected smooth projective variety over a sub-$p$-adic field $K$. For any given rank $n$, we prove that there are only finitely many isomorphism classes of representations $\pi_{1}^{EF}(X,x)\rightarrow \mathrm{GL}_{n}$, where $\pi_{1}^{EF}(X,x)$ is Nori's fundamental group of essentially finite bundles. Equivalently, there are only finitely many isomorphism classes of essentially finite bundles of rank $n$. This answers a question from C.Gasbarri. 
\end{abstract}
\noindent \textbf{Mathematics Subject Classification (MSC 2020):} 14F06,14F35, 14L15,14G20.\par
\section{Introduction}
Let $X$ be a geometrically connected smooth projective variety over a  field $K$. In \cite{nori1982fundamental} Nori introduces the notion of essentially finite bundles on $X$. Provided with a base point $x\in X(k)$, the category of essentially finite bundles is neutral Tannakian, and the resulting Tannakian group scheme $\pi_{1}^{EF}(X,x)$ classifies pointed finite torsors above $(X,x)$. By Tannakian duality there is an equivalence between the category of finite dimensional representations of $\pi_{1}^{EF}(X,x)$ and the category of essentially finite bundles.  \par
Now we further assume $K$ to be a number field and $X$ to be a curve. Here is a simple observation: essentially finite line bundles are torsion, so their isomorphism classes are in bijection with torsion $K$-rational points on the Jacobian variety of $X$, which are finite by Mordell-Weil theorem. In \cite{10.1215/S0012-7094-03-11723-8} C.Gasbarri asks for a non-abelian version of this observation:  are there only finitely many isomorphism classes of representations $\pi_{1}^{EF}(X,x)\rightarrow \mathrm{GL}_{n}$ of given rank $n$? Equivalently, are there only finitely many isomorphism classes of essentially finite bundles of given rank $n$? C.Gasbarri provides two evidences towards an affirmative answer: if $X$ has good reduction at a finite place $\mathfrak{p}$ over a prime $p$, there are only finitely many isomorphism classes of representations $\pi_{1}^{EF}(X,x)^{p'}\rightarrow \mathrm{GL}_{n}$, where $\pi_{1}^{EF}(X,x)^{p'}$ is the maximal prime-to-$p$ quotient of $\pi_{1}^{EF}(X,x)$. The proof is via specialization of prime-to-$p$ étale coverings as in \cite{grothendieck2003revetements} and the boundedness of essentially finite bundles as they are semistable of degree $0$. See Theorem 3.3 \cite{10.1215/S0012-7094-03-11723-8}; let $(\mathcal{X},x)$ be a pointed arithmetic surface over the ring of integers $\mathfrak{o}_{K}$ of $K$. C.Gasbarri defines a Nori fundamental group scheme $\pi_{1}^{EF}(\mathcal{X},x)$ over $\mathfrak{o}_{K}$, and he proves that  there are only finitely many isomorphism classes of representations $\pi_{1}^{EF}(\mathcal{X},x)_{K}\rightarrow\mathrm{GL}_{n}$ from the generic fiber. The proof is via Arakelov geometry. See Theorem 4.1 \cite{10.1215/S0012-7094-03-11723-8}. Note we should not confuse $\pi_{1}^{EF}(\mathcal{X},x)_{K}$ with the usual fundamental group scheme $\pi_{1}^{EF}(\mathcal{X}_{K},x_{K})$, though their relation has been discussed in \cite{antei2010comparison}. \par
In this article, we will answer C.Gasbarri's question affirmatively:
\begin{thm}[Theorem \ref{main_thm1}]\label{intro_1}
Let $(X,x)$ be a pointed geometrically connected smooth projective variety over a sub-$p$-adic field $K$. Then for a given rank $n$ there are only finitely many isomorphism classes of representations $\pi_{1}^{EF}(X,x)\rightarrow\mathrm{GL}_{n}$. Equivalently, there are only finitely many isomorphism classes of essentially finite bundles of rank $n$.
\end{thm}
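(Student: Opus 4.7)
The plan is Tannakian. In characteristic zero every representation $\rho\colon\pi_1^{EF}(X,x)\to\mathrm{GL}_n$ factors through a finite quotient, so $\rho$ may be identified with the data of a pointed finite Galois $G$-torsor $(Y,y)\to(X,x)$ together with a faithful embedding $G\hookrightarrow\mathrm{GL}_{n,K}$, where $G$ is the Tannakian monodromy group. Two such data yield isomorphic essentially finite bundles iff they are conjugate by $\mathrm{GL}_n(K)$. Thus the theorem reduces to three finiteness statements: (a) the order of $G(\bar K)$ is uniformly bounded in terms of $X$, $n$, $K$; (b) for each such $G$, the pointed $G$-torsors on $(X,x)$ are finite in number; and (c) for each $G$, the faithful $n$-dimensional $K$-representations of $G$ up to conjugation are finite in number.

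Given (a), both (b) and (c) follow quickly. Point (c) is standard character theory for finite groups. For (b), pointed $G$-torsors are classified by $G_K$-equivariant continuous surjections $\pi_1^{\text{ét}}(X_{\bar K},\bar x)\twoheadrightarrow G(\bar K)$; since $X$ is smooth projective geometrically connected over a field of characteristic zero, $\pi_1^{\text{ét}}(X_{\bar K},\bar x)$ is topologically finitely generated (by comparison with the topological fundamental group over $\mathbb{C}$), and thus admits only finitely many continuous quotients of any bounded order.

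The main step---and the main obstacle---is (a). I expect this to be obtained by combining Jordan's theorem with the sub-$p$-adic hypothesis (a cyclotomic estimate) and the rank-$1$ case of the theorem (a Mordell--Weil type finiteness for torsion line bundles on $X$). Concretely: embed $K\subseteq L$ with $L$ finitely generated over $\mathbb{Q}_p$; since the algebraic closure of $\mathbb{Q}_p$ in $L$ is a finite extension of $\mathbb{Q}_p$, one has $[K(\zeta_m):K]\to\infty$ as $m\to\infty$. Apply Jordan's theorem to $G(\bar K)\subseteq\mathrm{GL}_n(\bar K)$ to find an abelian normal subgroup $A\subseteq G(\bar K)$ of index $\le J(n)$, and decompose the defining representation over $\bar K$ into characters of $A$. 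The $G_K$-orbit structure of the appearing characters is constrained by two inputs: the cyclotomic estimate bounds the orders of characters whose type is ``constant'' (i.e.\ coming from factors $\mathbb{Z}/m$ with trivial Galois action, whose $G_K$-orbit has size $[K(\zeta_m):K]\le n$), while characters of ``multiplicative'' type $\mu_m$ are controlled by the rank-$1$ case applied to the abelian quotient of $\pi_1^{EF}(X,x)$. Combining these inputs bounds $|A|$, and then using $C_G(A)=A$ when $A$ is taken to be maximal normal abelian gives $[G:A]\le|\mathrm{Aut}(A)|$, yielding the desired bound on $|G(\bar K)|$.

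The principal technical difficulty is the structural decomposition of $G$ needed to apply the cyclotomic estimate to the constant factors and the rank-$1$ Mordell--Weil bound to the multiplicative factors in a coordinated way, together with the verification that the Jordan subgroup $A$ can be taken $G_K$-stable (for instance as a characteristic subgroup of bounded index) so that its character decomposition descends. Once this structural bound $|G(\bar K)|\le N(X,n,K)$ is in hand, the remaining steps (b) and (c) assemble cleanly to produce the required finiteness.
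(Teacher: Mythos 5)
Your outline correctly spots several of the ingredients that the paper also uses---Jordan's theorem, the rank-one finiteness for torsion line bundles, topological finite generation of $\pi_{1}^{et}(X_{\bar K},x_{\bar K})$---and your reduction of the theorem to steps (b) and (c) once the monodromy data is bounded is essentially sound. The gap is in step (a), which carries all the weight. You propose to bound the order of the abelian Jordan subgroup $A\subseteq G(\bar K)$ by decomposing the defining representation into characters of $A$, handling ``constant'' characters by the cyclotomic estimate and ``multiplicative'' ones by the rank-one case applied to the abelian quotient of $\pi_{1}^{EF}(X,x)$. The second input does not apply: a character of $A$ is a character of a \emph{subgroup} of a finite quotient of $\pi_{1}^{EF}(X,x)$, not of $\pi_{1}^{EF}(X,x)$ itself, so it does not correspond to a torsion line bundle on $X$; it only becomes the monodromy of a line bundle after pulling back to the finite \'etale cover of $X$ determined by the preimage of $A$, and that cover varies with $\rho$. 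And the cyclotomic estimate says nothing about precisely those characters: if $A$ is of multiplicative type (say $\mu_{m}(\bar K)$ with its tautological character), the Galois orbit of that character has size one for every $m$, so no bound on $m$ results. Your fallback ``$C_G(A)=A$ for $A$ maximal normal abelian, hence $[G:A]\le|\mathrm{Aut}(A)|$'' is false in general (take $G=\mathbb{Z}/2\times A_{5}$), though it is also unnecessary since Jordan already bounds $[G:A]$. Finally, the Galois-stability of $A$, which you flag but do not resolve, is exactly where the arithmetic of the base field must enter.

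The paper's proof never bounds $|G(\bar K)|$ directly (that bound is only true a posteriori). After reducing to a $p$-adic field by Lemma \ref{field_ext}, it uses Proposition \ref{lang} together with finite generation of $\pi_{1}^{et}(X_{\bar K},x_{\bar K})$ to replace the Jordan subgroup by a Galois-stable abelian normal subgroup scheme $\mathcal{H}_{\rho}\subseteq\mathcal{G}_{\rho}$ whose index is bounded by a constant $J(K,n,(X,x))$ independent of $\rho$ (Corollary \ref{jordan_gpscheme}). Since over a $p$-adic field there are only finitely many finite group schemes of bounded order (Lemma \ref{finite_group}) and finitely many morphisms from $\pi_{1}^{EF}(X,x)$ to each (Lemma \ref{finite_map}), the quotient torsors attached to $\mathcal{G}_{\rho}/\mathcal{H}_{\rho}$ are all dominated by one fixed pointed cover $(Y,y)\to(X,x)$; pulling back to $Y$ makes the monodromy abelian, and the rank-one finiteness (Mattuck, Lemma \ref{finite_line}) is applied there---on $Y$, not on $X$---after which $\mathcal{E}_{\rho}$ is recovered as a direct summand of $\pi_{*}\pi^{*}\mathcal{E}_{\rho}$ and Krull--Schmidt finishes. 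This uniform auxiliary cover is the missing idea in your plan: without it, neither of your two inputs controls the abelian part, and your argument for (a) cannot be completed. Note also that the rank-one finiteness is proved in the paper only over $p$-adic fields, so working over a finitely generated extension of $\mathbb{Q}_{p}$, as you propose, would require an additional reduction of the kind Lemma \ref{field_ext} provides.
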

By a sub-$p$-adic field we mean a subfield of some $p$-adic field (e.g. a number field). Our argument is different from that from \cite{10.1215/S0012-7094-03-11723-8}, summarized as follows. First, by a simple density argument, we can assume the ground field $K$ to be a $p$-adic field (Lemma \ref{field_ext}). Then we show that, in certain case, Jordan's theorem on finite groups embedded into the group of invertible matrices descends to a version for finite group schemes (Corollary \ref{jordan_gpscheme}). Using our variant of Jordan's theorem we reduce Theorem \ref{intro_1} to the case of essentially finite bundles with abelian monodromy, and we conclude from Mattuck's structure theorem for rational points on an abelian variety over a $p$-adic field (Proposition \ref{finite_line}). 
\subsection*{Notation and Convention}
All rings are assumed to be commutative and with a unit. 
\begin{enumerate}
\item By a $p$-adic field, we mean a finite extension of $\mathbb{Q}_{p}$. By a sub-$p$-adic field, we mean a subfield of some $p$-adic field.
\item For a field $K$, we usually use $\bar{K}$ to denote a fixed algebraic closure. All algebraic extensions of $K$ are taken in $\bar{K}$. 
\item For morphisms $X\rightarrow S$ and $S'\rightarrow S$ between schemes, we write $X_{S'}$ for the fiber product $X\times_{S}S'$. In case $S'=\mathrm{Spec}\,R$, we also write $X_{R}$ for $X_{\mathrm{Spec}\,R'}$.  
\item  Let $K$ be a field. By a variety over $K$ we mean an integral scheme which is of finite type and separated over $K$.  A pointed variety over $K$ is a pair $(X,x)$, which is a variety $X$ over $K$ provided with $x\in X(K)$. 
\item Let $K$ be a field. We write $\mathrm{GL}_{n,K}$ to be the general linear group scheme over $K$. We also write $\mathrm{GL}_{n}$ when $K$ is clear from the context. By a rank $n$-representation of an affine group scheme $\pi$ over $K$,we mean a morphism $\pi_{1}\rightarrow \mathrm{GL}_{n}$ between affine group schemes. Two representations are said to be isomorphic if they differ up to a conjugation by an element from $\mathrm{GL}_{n}(K)$.
\end{enumerate}
\subsection*{Acknowledgement}
The author would like to thank Professor C.Gasbarri for his encouragement to generalize his result in greater generality. 
\section{The main context}
We refer to  \cite{nori1982fundamental} and Chapter 6 \cite{galfun} for a complete treatment of Nori's fundamental group of essentially finite bundles. Let $(X,x)$ be a pointed geometrically connected smooth projective variety over field $K$. A vector bundle on $X$ is said to be Nori semistable if its restriction to an arbitrary smooth proper curve $C\hookrightarrow X$ is semistable. Nori semistable bundles form a $K$-linear abelian tensor category. For any monic polynomial $f=x^{n}+a_{n-1}x^{n-1}+...+a_{0}$ with non-negative integral coefficients and any vector bundle $\mathcal{E}$, we write $f(\mathcal{E})$ for the vector bundle \[\mathcal{E}^{\otimes n}\oplus (\mathcal{E}^{\otimes n-1})^{\oplus a_{n-1}}\oplus ... \oplus\mathcal{E}^{\oplus a_{1}}\oplus \mathcal{O}_{X}^{\oplus a_{0}}.\] A vector bundle $\mathcal{E}$ is said to be finite if there exist $f\neq g$ with $f(\mathcal{E})\cong g(\mathcal{E})$. The category of essentially finite bundles is the $K$-linear abelian tensor subcategory of the category of Nori semistable bundles spanned by finite bundles and objects inside it are called essentially finite bundles. The category of essentially finite bundles is Tannakian, neutralized by the fiber functor defined by $x$, and the corresponding Tannakian group scheme is denoted by $\pi_{1}^{EF}(X,x)$, called Nori's fundamental group scheme of essentially finite bundles. Nori's fundamental group $\pi_{1}^{EF}(X,x)$ classifies pointed finite torsors in the sense that, giving a pointed torsor $(Y,y)\rightarrow (X,x)$ under a finite group scheme $\mathcal{G}$ over $K$ is equivalent to giving a morphism $\pi_{1}^{EF}(X,x)\rightarrow \mathcal{G}$ between group schemes. As a consequence, essentially finite line bundles are exactly torsion line bundles. We include two further properties in characteristic $0$: the category of essentially finite bundles is semisimple (i.e. all short exact sequences split) so essentially finite bundles are finite; the homotopy exact sequence of étale fundamental groups 
\[1\rightarrow\pi_{1}^{et}(X_{\bar{K}},x_{\bar{K}})\rightarrow \pi_{1}^{et}(X,x)\rightarrow\mathrm{Gal}(\bar{K}/K)\rightarrow 1\]
is splitted by the section $\mathrm{Gal}(\bar{K}/K)\rightarrow \pi_{1}^{et}(X,x)$ defined by $x$. The splitting induces an  action of $\mathrm{Gal}(\bar{K}/K)$ on $\pi_{1}^{et}(X_{\bar{K}},x_{\bar{K}})$, and $\pi_{1}^{et}(X_{\bar{K}},x_{\bar{K}})$ descends to a pro-étale group scheme over $K$, which is $\pi_{1}^{EF}(X,x)$. 
\begin{lem}[Proposition 3.5 \cite{10.1215/S0012-7094-03-11723-8}]
\label{field_ext}
Let $K$ be an infinite field and $(X,x)$ be a pointed geometrically connected smooth projective variety over $K$. Let $K\hookrightarrow L$ be an arbitrary field extension and we write $p_{L/K}$ for the natural morphism $X_{L}\rightarrow X$. For each $n$ and essentially finite bundles $\mathcal{E},\mathcal{F}$ of rank $n$, we have $\mathcal{E}\cong \mathcal{F}$ if and only if $p_{L/K}^{*}\mathcal{E}_{L}\cong p_{L/K}^{*}\mathcal{F}_{L}$. 
\end{lem}
\begin{proof}
The ``only if'' part is trivial. For the ``if'' part, we regard $\mathrm{Hom}_{\mathcal{O}_{X}}(\mathcal{E},\mathcal{F})$ as an affine space over $\mathrm{Spec}\,K$. Fixing a basis for $\mathcal{E}_{x}$ and for $\mathcal{F}_{x}$ respectively, we obtain a determinant map 
\[\mathrm{det}_{x}:\mathrm{Hom}_{\mathcal{O}_{X}}(\mathcal{E},\mathcal{F}) \rightarrow \mathbb{A}^{1},\]
which is a morphism between varieties over $K$. We claim not all $K$-rational points of $\mathrm{Hom}_{\mathcal{O}_{X}}(\mathcal{E},\mathcal{F})$ are mapped to $0\in \mathbb{A}^{1}(K)$ by $\mathrm{det}_{x}$. Indeed, if this is not the case, $\mathrm{det}_{x}$ will be the constant map to $0$, since $K$ is infinite and $K$-rational points form a Zariski dense subset of $\mathrm{Hom}_{\mathcal{O}_{X}}(\mathcal{E},\mathcal{F})$. However, by assumption, the base change
\[\mathrm{det}_{x_{L}}:\mathrm{Hom}_{\mathcal{O}_{X_{L}}}(p_{L/K}^{*}\mathcal{E},p^{*}_{L/K}\mathcal{F})\cong\mathrm{Hom}_{\mathcal{O}_{X}}(\mathcal{E},\mathcal{F})_{L} \rightarrow \mathbb{A}_{L}^{1}\]
is not constantly $0$, as there is an isomorphism between  $\mathcal{E}_{L}$ and $\mathcal{F}_{L}$. A contradiction. Now we pick up $\alpha:\mathcal{E}\rightarrow\mathcal{F}$, with $\mathrm{det}_{x}(\alpha)\neq 0$. The sheaves $\mathrm{ker}(\alpha)$ and $\mathrm{coker}(\alpha)$ are still essentially finite bundles, and they are forced to be $0$ as $\mathrm{ker}(\alpha)_{x}=\mathrm{coker}(\alpha)_{x}=0$ by construction. Therefore $\alpha$ gives an isomorphism between $\mathcal{E}$ and $\mathcal{F}$.
\end{proof}
\begin{prop}[Proposition 14 \uppercase\expandafter{\romannumeral2} §5 \cite{lang1994algebraic}]
\label{lang}
Let $K$ be a $p$-adic field. There are only finitely many finite extensions of $K$ of given degree.
\end{prop}
\begin{thm}[\cite{jordan1878memoire}]
\label{jordan}
For any integer $n>0$, there exists a constant $J(n)$, such that for any field $K$ of characteristic $0$ and any finite subgroup $G$ of $\mathrm{GL}_{n}(K)$, $G$ has an abelian normal subgroup $H$ of index $\leq J(n)$.
\end{thm}
\begin{cor}
\label{jordan_gpscheme}
Let $K$ be a $p$-adic field, $n>0$ be an integer, and $(X,x)$ be a pointed geometrically connected smooth projective variety $K$. There exists a constant $J(K,n,(X,x))$ (depending on $K$, $n$, and $(X,x)$), such that any finite group scheme $\mathcal{G}$ over $K$ with $\pi_{1}^{EF}(X,x)\twoheadrightarrow\mathcal{G}\hookrightarrow\mathrm{GL}_{n,K}$ (i.e. $\mathcal{G}$ is a finite quotient of $\pi_{1}^{EF}(X,x)$ and it can be embedded into $\mathrm{GL}_{n}$) has an abelian normal finite subgroup scheme $\mathcal{H}$ over $K$ of index $\leq J(K,n,(X,x))$. 
\end{cor}
\begin{proof}
We write $G$ for $\mathcal{G}(\bar{K})$, on which $\mathrm{Gal}(\bar{K}/K)$ acts. By Theorem $\ref{jordan}$, $G$ has an abelian normal subgroup $H'$ of index $\leq J(n)$.  We claim that there is a constant $J'(n,(X,x))$ depending on $n$ and $(X,x)$, such that the number of normal subgroups of $G$ of index $\leq J(n)$ is at most $J'(n,(X,x))$. Indeed, the number of such normal subgroups is bounded by the number of surjections from  $\pi_{1}^{et}(X_{\bar{K}},x_{\bar{K}})$ to a finite group of order $\leq J(n)$, which depends on $n$ and the number of topological generators of $\pi_{1}^{et}(X_{\bar{K}},x_{\bar{K}})$. Now the action of $\mathrm{Gal}(\bar{K}/K)$ on $G$ permutes all normal subgroups of $G$ with index $\leq J(n)$, which induces a morphism from $\mathrm{Gal}(\bar{K}/K)$ to the permutation group $S_{d}$ for some $d\leq J'(n,(X,x))$. There exists a finite Galois extension $L$ of $K$ of degree $J''(K,n,(X,x))$ depending on $K$, $n$, and $(X,x)$, such that $\mathrm{Gal}(\bar{K}/L)$ permutes trivially the set of normal subgroups of $G$ of index $\leq J(n)$. Indeed, we can choose $L$ to be an arbitrary finite Galois extension of $K$ containing all extensions of degree $\leq \# S_{d}$ for all $d\leq J'(n,(X,x))$. This is possible by Proposition \ref{lang}. Now we have $\sigma(H')=H'$ for all $\sigma\in \mathrm{Gal}(\bar{K}/L)$. The group $H=\bigcap_{\sigma\in\mathrm{Gal}(L/K)}\sigma(H')$ is normal abelian in $G$, with $\sigma (H)=H$ for all $\sigma\in\mathrm{Gal}(\bar{K}/K)$. The index of $H$ in $G$ is bounded explicitly in terms of $J''(K,n,(X,x))$ and $J(n)$, so let us say it is $J(K,n,(X,x))$. The group $H$ descends to an abelian normal finite subgroup scheme $\mathcal{H}$ of $\mathcal{G}$ with index $\leq J(K,n,(X,x))$. 
\end{proof}
\begin{lem}
\label{finite_group}
Let $K$ be a $p$-adic field. There are only finitely many isomorphism classes of finite group schemes $\mathcal{G}$ over $K$ of given order. 
\end{lem}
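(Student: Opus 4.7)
The plan is to use the étale-group-scheme description: since $K$ has characteristic $0$, every finite group scheme over $K$ is étale, hence corresponds to a finite continuous $\mathrm{Gal}(\bar{K}/K)$-set with group structure. Concretely, giving $\mathcal{G}$ of order $n$ is equivalent to giving a pair $(G,\rho)$ where $G$ is an abstract finite group of order $n$ and $\rho\colon \mathrm{Gal}(\bar{K}/K)\rightarrow\mathrm{Aut}(G)$ is a continuous homomorphism; two such pairs $(G,\rho)$ and $(G',\rho')$ define isomorphic group schemes iff there is a group isomorphism $\varphi\colon G\rightarrow G'$ with $\varphi\circ\rho(\sigma)=\rho'(\sigma)\circ\varphi$ for every $\sigma$.

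With this dictionary in hand, I would argue as follows. First, up to isomorphism there are only finitely many abstract finite groups $G$ of order $n$ (there are at most $n^{n^{2}}$ multiplication tables, for instance). Fix one such $G$; then $\mathrm{Aut}(G)$ is a finite group, so any continuous $\rho\colon\mathrm{Gal}(\bar{K}/K)\rightarrow\mathrm{Aut}(G)$ factors through $\mathrm{Gal}(L/K)$ for a finite Galois extension $L/K$ of degree at most $|\mathrm{Aut}(G)|$, which is bounded in terms of $n$ only.

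Now apply Proposition \ref{lang}: there are only finitely many extensions of $K$ of bounded degree, hence only finitely many such $L$ inside $\bar{K}$. For each such $L$ the quotient $\mathrm{Gal}(L/K)$ is finite, and therefore admits only finitely many homomorphisms into the finite group $\mathrm{Aut}(G)$. Combining these three finitenesses — of $G$ up to isomorphism, of $L$, and of $\rho$ factoring through $\mathrm{Gal}(L/K)$ — yields finitely many pairs $(G,\rho)$ and a fortiori finitely many isomorphism classes of $\mathcal{G}$.

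I do not anticipate any serious obstacle here: the only non-trivial inputs are the equivalence between finite group schemes in characteristic $0$ and finite groups with continuous Galois action (which is standard Cartier theory), and Proposition \ref{lang}, already recorded above. The mild subtlety is that isomorphism of group schemes is weaker than equality of pairs, but this only makes the counting smaller, so finiteness is preserved.
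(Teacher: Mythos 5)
Your proposal is correct and is essentially the paper's own argument: both reduce a finite group scheme over the characteristic-zero field $K$ to a pair consisting of a finite group of the given order and a continuous Galois action, then combine the finiteness of groups of bounded order with Proposition \ref{lang} (finitely many extensions of bounded degree) to bound the possible actions. Your write-up merely makes explicit the factorization of $\rho$ through a finite Galois extension of degree bounded in terms of $n$, which the paper leaves implicit.
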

\begin{proof}
The group scheme $\mathcal{G}$ is determined by the abstract group $G=\mathcal{G}(\bar{K})$ of given order and a continuous action of $\mathrm{Gal}(\bar{K}/K)$ on $G$, i.e, $\mathrm{Gal}(\bar{K}/K)\rightarrow \mathrm{Aut}(G)$.  We conclude by noting there are only finitely many groups of given order, and there are only finitely many extensions of $K$ of given degree by Proposition \ref{lang}. 
\end{proof}
\begin{lem}
\label{finite_map}
Let $K$ be a field of characteristic $0$, $\mathcal{G}$ be a finite group scheme over $K$, and $(X,x)$ be a pointed geometrically connected smooth projective  variety over $K$. There are only finitely many morphisms $\pi_{1}^{EF}(X,x)\rightarrow \mathcal{G}$.
\end{lem}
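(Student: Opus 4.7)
The plan is to translate the problem into counting continuous homomorphisms of profinite groups and then to invoke topological finite generation of the geometric étale fundamental group.

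First I would use the Galois descent description of $\pi_{1}^{EF}(X,x)$ recalled in the excerpt: as a pro-étale group scheme over $K$, it is obtained from the profinite group $\pi_{1}^{et}(X_{\bar{K}},x_{\bar{K}})$ together with the natural $\mathrm{Gal}(\bar{K}/K)$-action coming from the section defined by $x$. Since $K$ has characteristic $0$, the finite $K$-group scheme $\mathcal{G}$ is étale, so $\mathcal{G}_{\bar{K}}$ is the constant group scheme on the finite Galois-module $G:=\mathcal{G}(\bar{K})$. With these identifications, a morphism of $K$-group schemes $\pi_{1}^{EF}(X,x)\rightarrow \mathcal{G}$ is the same datum as a continuous $\mathrm{Gal}(\bar{K}/K)$-equivariant homomorphism $\pi_{1}^{et}(X_{\bar{K}},x_{\bar{K}})\rightarrow G$. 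In particular, the set in question injects into the set of all continuous homomorphisms $\pi_{1}^{et}(X_{\bar{K}},x_{\bar{K}})\rightarrow G$, so it is enough to bound the cardinality of the latter.

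Next I would invoke the standard fact that $\pi_{1}^{et}(X_{\bar{K}},x_{\bar{K}})$ is topologically finitely generated. This reduces to the case $\bar{K}=\mathbb{C}$ by invariance of the étale fundamental group under extension of algebraically closed base fields in characteristic $0$, applied to a finitely generated model of $(X,x)$, and then follows from the Riemann--Grothendieck comparison theorem together with the observation that $(X_{\mathbb{C}})^{an}$ is a compact complex manifold and hence has the homotopy type of a finite CW complex, so that its topological fundamental group is finitely presented.

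Fixing topological generators $\gamma_{1},\dots,\gamma_{k}$ of $\pi_{1}^{et}(X_{\bar{K}},x_{\bar{K}})$, any continuous homomorphism $\phi\colon \pi_{1}^{et}(X_{\bar{K}},x_{\bar{K}})\rightarrow G$ is determined by the tuple $(\phi(\gamma_{1}),\dots,\phi(\gamma_{k}))\in G^{k}$, hence there are at most $|G|^{k}$ such homomorphisms, which completes the argument. The only non-formal ingredient is the topological finite generation of $\pi_{1}^{et}(X_{\bar{K}},x_{\bar{K}})$, which I regard as the main point; the rest is bookkeeping through the Galois descent presentation of $\pi_{1}^{EF}(X,x)$.
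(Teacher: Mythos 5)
Your proposal is correct and follows essentially the same route as the paper: identify morphisms $\pi_{1}^{EF}(X,x)\rightarrow\mathcal{G}$ with $\mathrm{Gal}(\bar{K}/K)$-equivariant continuous homomorphisms $\pi_{1}^{et}(X_{\bar{K}},x_{\bar{K}})\rightarrow\mathcal{G}(\bar{K})$ and conclude from the topological finite generation of $\pi_{1}^{et}(X_{\bar{K}},x_{\bar{K}})$. The only difference is that you spell out the comparison-theorem argument for finite generation, which the paper simply cites as a known fact.
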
 
\begin{proof}
Write $G$ for $\mathcal{G}(\bar{K})$. A morphism $\pi_{1}^{EF}(X,x)\rightarrow \mathcal{G}$ is equivalent to a $\mathrm{Gal}(\bar{K}/K)$-equivariant morphism $\pi_{1}^{et}(X_{\bar{K}},x_{\bar{K}})\rightarrow G$. We conclude by noting that $\pi_{1}^{et}(X_{\bar{K}},x_{\bar{K}})$ is topologically finitely generated.
\end{proof}
\begin{prop}
\label{finite_line}
Let $(X,x)$ be a pointed geometrically connected smooth projective variety over a $p$-adic field $K$.  For each $n$, there are only finitely many isomorphism classes of representations $\pi_{1}^{EF}(X,x)\rightarrow \mathrm{GL}_{n}$ with abelian image. Equivalently, there are only finite many isomorphism classes of essentially finite bundles which split into a direct sum of line bundles after being pulled back to $X_{\bar{K}}$. 
\end{prop}
\begin{proof}
By Lefschetz theorem we reduce to the case of curves, so now let $X$ be a curve with genus $g \geq 1$ (the case $g=0$ is trivial). \par
Let us use $p_{\bar{K}/K}$ to denote the canonical morphism $X_{\bar{K}}\rightarrow X$. For each $\sigma\in \mathrm{Gal}(\bar{K}/K)$, $\sigma$ induces an automorphism $\mathrm{Spec}\,\bar{K}\rightarrow\mathrm{Spec}\,\bar{K}$, and therefore an automorphism $f_{\sigma}$ of $X_{\bar{K}}$. Let $\mathcal{E}$ be an essentially finite bundle which splits into a direct sum of line bundles after being pulled back along $p_{\bar{K}/K}$. We can write 
\[p_{\bar{K}/K}^{*}\mathcal{E}\cong\mathcal{L}_{1}^{\oplus n_{1}}\oplus...\oplus\mathcal{L}_{m}^{\oplus n_{m}},n_{1}+...+n_{m}=n\]
where $\mathcal{L}_{i}$ is a line bundle for $1\leq i\leq m$ and $\mathcal{L}_{i}\ncong\mathcal{L}_{j}$ for $1\leq i\neq j\leq m$.
Now for any $\sigma\in\mathrm{Gal}(\bar{K}/K)$ we have $f_{\sigma}^{*}p_{\bar{K}/K}^{*}\mathcal{E}\cong p_{\bar{K}/K}^{*}\mathcal{E}$ as vector bundles so by Krull-Schmidt theorem we have a premutation $\beta(\sigma)$ of $\{1,...,m\}$ such that $f_{\sigma}^{*}\mathcal{L}_{i}\cong\mathcal{L}_{\beta(\sigma)(i)}$, $1\leq i\leq m$. 
Now from $f_{\sigma\tau}=f_{\tau}\circ f_{\sigma}$,  we get $\beta(\sigma\tau)=\beta(\sigma)\beta(\tau)$, so $\beta$ defines a group homomorphism
\[\beta: \mathrm{Gal}(\bar{K}/K)\rightarrow S_{m},\]
The group homomorphism $\beta$ defines surjective morphism from $\mathrm{Gal}(\bar{K}/K)$ to the image of $\beta$, which is of cardinality $\leq m!\leq n!$, and by Galois correspondence it corresponds to an extension of $K$ of degree $\leq m!\leq n!$. For the $p$-adic field $K$, there are only finitely many extensions of degree $\leq n!$, by Proposition \ref{lang}. We find a finite Galois extension $L/K$ which only depends on $K$ and $n$, such that $\beta|_{\mathrm{Gal}(\bar{K}/L)}$ is trivial.  Now for $\sigma\in \mathrm{Gal}(\bar{K}/L)$, still by Krull-Schmidt theorem we have for each $i$, $f_{\sigma}^{*}\mathcal{L}_{i}\cong \mathcal{L}_{i}$ so $\mathcal{L}_{i}$ corresponds to an element from $\mathrm{Pic}^{0}_{X}(\bar{K})^{\mathrm{Gal}(\bar{K}/L)}=\mathrm{Pic}_{X_{L}}^{0}(\bar{K})^{\mathrm{Gal}(\bar{K}/L)}=\mathrm{Pic}_{X_{L}}^{0}(L)$. Let us write $p_{\bar{K}/L}$ for the natural morphism $X_{\bar{K}}\rightarrow X_{L}$ and $p_{L/K}$ for the natural morphism $X_{L}\rightarrow X$. For each $1\leq i\leq m$, we have a line bundle $\mathcal{K}_{i}$ on $X_{L}$ with $p_{\bar{K}/L}^{*}\mathcal{K}_{i}\cong\mathcal{L}_{i}$. We form a vector bundle on $X_{L}$:
\[\mathcal{F}=\mathcal{K}_{1}^{\oplus n_{1}}\oplus...\oplus\mathcal{K}_{m}^{\oplus n_{m}}.\] 
We have $p_{\bar{K}/L}^{*}(p_{L/K}^{*}\mathcal{E})\cong p_{\bar{K}/L}^{*}\mathcal{F}$, and we conclude by Lemma \ref{field_ext} that there already exists an isomorphism $p_{L/K}^{*}\mathcal{E}\cong\mathcal{F}$ (which may not obtained from the prescribed isomorphism on $X_{\bar{K}}$ by descent), i.e. $p_{L/K}^{*}\mathcal{E}$ already splits into a direct sum of line bundles.  Each $\mathcal{K}_{i}$ corresponds to a torsion element of $\mathrm{Pic}_{X_{L}}^{0}(L)$, and the torsion part of $\mathrm{Pic}_{X_{L}}^{0}(L)$ is a finite abelian group, by \cite{mattuck1955abelian}. Therefore among all possible $\mathcal{E}$, there are only finitely many isomorphism classes of $p_{L/K}^{*}\mathcal{E}$.
Since $\mathcal{E}$ is a direct summand of $p_{L/K*}p_{L/K}^{*}\mathcal{E}$, we conclude that there are only finitely many possible $\mathcal{E}$, still by Krull-Schmidt theorem. 
\end{proof}
\begin{thm}\label{main_thm1}
Let $(X,x)$ be a pointed geometrically connected smooth projective variety over a sub-$p$-adic field $K$. Then for a given rank $n$ there are only finitely many isomorphism classes of representations $\pi_{1}^{EF}(X,x)\rightarrow\mathrm{GL}_{n}$. Equivalently, there are only finitely many isomorphism classes of essentially finite bundles of rank $n$.
\end{thm}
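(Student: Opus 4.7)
The strategy is to show that, for $K$ a $p$-adic field, the order of the (necessarily finite) image $\mathcal{G}\subseteq\mathrm{GL}_{n,K}$ of any rank-$n$ representation $\rho:\pi_1^{EF}(X,x)\to\mathrm{GL}_n$ admits a uniform bound $N=N(K,n,(X,x))$. Granting this, Lemma \ref{finite_group} produces only finitely many isomorphism classes of finite group schemes $\mathcal{G}$ of order $\leq N$ over $K$, Lemma \ref{finite_map} provides finitely many morphisms $\pi_1^{EF}(X,x)\to\mathcal{G}$ for each, and a finite étale group scheme over a characteristic zero field admits only finitely many $n$-dimensional representations up to conjugation (reducing to the classical fact for finite groups via Galois descent); the finiteness of isomorphism classes of $\rho$ then follows. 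That $\mathcal{G}$ is finite is automatic in characteristic zero, since the Tannakian subcategory generated by the essentially finite bundle corresponding to $\rho$ is equivalent to representations of a finite group scheme.

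First I would reduce to the case where $K$ is a $p$-adic field. Given an embedding $K\hookrightarrow L$ with $L$ finitely generated over $\mathbb{Q}_p$, present $L=\mathrm{Frac}(A)$ with $A$ a smooth finitely generated $\mathbb{Q}_p$-algebra, spread $(X_L,x_L)$ to a smooth projective pointed family over an open $U\subseteq\mathrm{Spec}\,A$, and specialize at a closed point $s\in U$ whose residue field $K'$ is a $p$-adic field. Smooth-and-proper invariance of étale fundamental groups in characteristic zero yields a correspondence between rank-$n$ representations of $\pi_1^{EF}$ on the generic and special fibers of the family, and combined with Lemma \ref{field_ext} this deduces finiteness over $K$ from finiteness over the $p$-adic field $K'$. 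This is the most delicate step.

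Now assume $K$ is $p$-adic and produce the bound $N$ on $|\mathcal{G}|$. Apply Corollary \ref{jordan_gpscheme} to obtain an abelian normal subgroup scheme $\mathcal{H}\trianglelefteq\mathcal{G}$ of index at most $J:=J(K,n,(X,x))$. The composite $\pi_1^{EF}(X,x)\to\mathcal{G}/\mathcal{H}$ is a morphism to a finite group scheme of order $\leq J$; ranging over all such target group schemes, Lemmas \ref{finite_group} and \ref{finite_map} together give only finitely many such morphisms, and hence only finitely many associated pointed Galois covers $(Y,y)\to(X,x)$ of degree $\leq J$. Let $(Y^0,y)$ be the geometrically connected component of $Y$ containing $y$, descended to its field of definition $K^0$, which satisfies $[K^0:K]\leq J$; Proposition \ref{lang} restricts $K^0$ to a finite list. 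The restriction of $\rho$ to $\pi_1^{EF}(Y^0,y)$ is a rank-$n$ representation with abelian image (lying inside $\mathcal{H}$), so Lemma \ref{finite_line} applied over the $p$-adic field $K^0$ forces its isomorphism class, and in particular the order of its image, into a finite set. Since this image has index at most $[K^0:K]\leq J$ in $\mathcal{H}$, we obtain a uniform bound on $|\mathcal{H}|$ and hence on $|\mathcal{G}|\leq J\cdot|\mathcal{H}|$. The chief obstacle is the sub-$p$-adic to $p$-adic reduction; the remainder of the argument assembles from the preceding lemmas via Jordan's theorem applied inside $\mathrm{GL}_n$.
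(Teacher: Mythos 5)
Your proposal is correct in outline and runs on the same ingredients as the paper --- reduction to a $p$-adic base field, Corollary \ref{jordan_gpscheme} to extract an abelian normal subgroup scheme $\mathcal{H}_{\rho}\subseteq\mathcal{G}_{\rho}$ of index $\leq J$, Lemmas \ref{finite_group} and \ref{finite_map} to see that only finitely many pointed covers arise from the quotients $\mathcal{G}_{\rho}/\mathcal{H}_{\rho}$, and Lemma \ref{finite_line} applied on such a cover --- but your endgame is genuinely different. The paper fixes one pointed geometrically connected \'etale cover $\pi:(Y,y)\rightarrow(X,x)$ dominating all torsors of order $\leq J$, observes that $\pi^{*}\mathcal{E}_{\rho}$ has abelian monodromy and hence finitely many possible isomorphism classes by Lemma \ref{finite_line}, and recovers finiteness of the $\mathcal{E}_{\rho}$ themselves from the fact that $\mathcal{E}_{\rho}$ is a direct summand of $\pi_{*}\pi^{*}\mathcal{E}_{\rho}$ together with Krull--Schmidt. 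You instead use Lemma \ref{finite_line} to bound $|\mathcal{H}_{\rho}|$, hence $|\mathcal{G}_{\rho}|\leq J\cdot|\mathcal{H}_{\rho}|$, and then count: finitely many finite group schemes of bounded order (Lemma \ref{finite_group}), finitely many morphisms from $\pi_{1}^{EF}(X,x)$ to each (Lemma \ref{finite_map}), and finitely many $n$-dimensional representations of each fixed finite group scheme up to $\mathrm{GL}_{n}(K)$-conjugacy (semisimplicity in characteristic $0$ plus Noether--Deuring); this trades the paper's pushforward/Krull--Schmidt step for a standard representation-theoretic finiteness fact, and has the merit of making the uniform bound on the monodromy group explicit.

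Two soft spots. First, the detour through $Y^{0}$, $K^{0}$ and the claim that the image of $\pi_{1}^{EF}(Y^{0},y)$ has index $\leq[K^{0}:K]$ in $\mathcal{H}_{\rho}$ is unnecessary and, as stated, unjustified: since $\pi_{1}^{EF}(X,x)\rightarrow\mathcal{G}_{\rho}/\mathcal{H}_{\rho}$ is faithfully flat, the associated pointed torsor $(Y,y)$ is already geometrically connected over $K$ (so $K^{0}=K$), and $\pi_{1}^{EF}(Y,y)$ is the kernel of this map, whose image under $\pi_{1}^{EF}(X,x)\twoheadrightarrow\mathcal{G}_{\rho}$ is exactly $\mathcal{H}_{\rho}$; that is the statement your bound on $|\mathcal{H}_{\rho}|$ actually needs, and it is what the paper uses. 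Second, in the reduction to the $p$-adic case the paper simply embeds $K$ in a larger field and invokes injectivity of base change on isomorphism classes (Lemma \ref{field_ext}); your spreading-out-and-specialization argument aims at the more general reading of ``sub-$p$-adic'' (subfields of finitely generated extensions of $\mathbb{Q}_{p}$, which need not embed into a $p$-adic field, e.g.\ $\mathbb{Q}_{p}(t)$), but the asserted ``correspondence between rank-$n$ representations on the generic and special fibers'' is precisely where the work lies: one must match the Galois actions through a decomposition group at the chosen closed point and prove injectivity of specialization on isomorphism classes, and a bare appeal to smooth--proper invariance of \'etale fundamental groups does not do this. The clean route is the complete discrete valuation ring induction indicated in the paper's concluding remark (following Theorem 3.3 of Gasbarri), so this step of your proposal should be regarded as a sketch rather than a proof.
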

\begin{proof}
By Lemma \ref{field_ext} we can assume $K$ is a $p$-adic field. Recall we have a constant $J(K,n,(X,x))$ from Corollary  \ref{jordan_gpscheme}. By Lemma \ref{finite_group} and Lemma \ref{finite_map}, there are only finitely many pointed torsors over $(X,x)$ under a finite étale group scheme $\mathcal{G}$ of order $\leq J(K,n,(X,x))$, such that the defining morphism $\pi_{1}^{EF}(X,x)\rightarrow\mathcal{G}$ is faithfully flat (this condition ensures that these torsors are geometrically connected over $K$). We can find a pointed geometrically connected finite étale covering $\pi:(Y,y)\rightarrow (X,x)$ dominating all of these pointed torsors. Now for any representation $\rho: \pi_{1}^{EF}(X,x)\rightarrow\mathrm{GL}_{n}$, we write $\mathcal{E}_{\rho}$ for the essentially finite bundle associated to $\rho$ and $\mathcal{G}_{\rho}$ for the image of $\rho$. By Corollary \ref{jordan_gpscheme}, there exists an abelian normal subgroup scheme $\mathcal{H}_{\rho}$ of $\mathcal{G}_{\rho}$ of index $\leq J(K,n,(X,x))$. The faithfully flat composition $\pi_{1}^{EF}(X,x)\twoheadrightarrow \mathcal{G}_{\rho}\twoheadrightarrow\mathcal{G}_{\rho}/\mathcal{H}_{\rho}$ defines a pointed torsor under a finite étale group scheme of order $\leq J(K,n,(X,x))$. By construction, the composition $\pi_{1}^{EF}(Y,y)\hookrightarrow\pi_{1}^{EF}(X,x)\twoheadrightarrow \mathcal{G}_{\rho}\twoheadrightarrow\mathcal{G}_{\rho}/\mathcal{H}_{\rho}$ is trivial. Therefore the composition $\pi_{1}^{EF}(Y,y)\hookrightarrow\pi_{1}^{EF}(X,x)\twoheadrightarrow \mathcal{G}_{\rho}$ factors through $\mathcal{H}_{\rho}$. Now the pullback $\pi^{*}\mathcal{E}_{\rho}$ corresponding to the representation of $\pi_{1}^{EF}(Y,y)$ defined by the composition 
$\pi_{1}^{EF}(Y,y)\hookrightarrow\pi_{1}^{EF}(X,x)\twoheadrightarrow \mathcal{G}_{\rho}\hookrightarrow\mathrm{GL}_{n}$. The representation factors through $\mathcal{H}_{\rho}$ so it has abelian image. Applying Proposition \ref{finite_line} to $(Y,y)$, for varying $\rho$, there are only finitely many isomorphism classes among $\pi^{*}\mathcal{E}_{\rho}$. Now note $\mathcal{E}_{\rho}$ is a direct summand of $\pi_{*}\pi^{*}\mathcal{E}_{\rho}$, so for varying $\rho$, there are only finitely many isomorphism classes among $\mathcal{E}_{\rho}$, by Krull–Schmidt theorem.
\end{proof}
\begin{rmk}\label{field}Over which field $K$ of characteristic $0$ does Theorem \ref{main_thm1} hold? Our main theorem deals with the case that $K$ is a sub-$p$-adic field. Using the same argument as Theorem 3.3 \cite{10.1215/S0012-7094-03-11723-8} (and Lemma \ref{field_ext}), we can prove the following: let $R$ be a complete discrete valuation ring with equal characteristic $(0,0)$. If Theorem \ref{main_thm1} holds over the residue field $k$ of $R$, it holds over any subfield of the fraction field $K$ of $R$. Therefore starting from $K$, we have $K((t))$, $K(t)$,...
\end{rmk}
\begin{rmk}Is there a chance that Theorem \ref{main_thm1} holds for a pointed geometrically connected smooth projective variety $(X,x)$ over a field $K$ of characteristic $p>0$?
\begin{enumerate}
\item The answer is affirmative in case $K$ is finite. Essentially finite bundles are slope-semistable of degree $0$ with respect to an arbitrary polarization, and therefore essentially finite bundles of given rank form a bounded family. 
\item\label{line}The answer is affirmative in case $n=1$ and $K$ is a global or local field. We can apply Lang-Néron theorem in the global case and we refer to this post \cite{148547} in the local case.
\item\label{nori} Point \ref{line} turns out to be misleading: the answer is negative in case $n\geq 2$ and $K$ is infinite.  Nilpotent bundles (i.e. iterated extensions of trivial bundles) are essentially finite (Proposition 3, Chapter \uppercase\expandafter{\romannumeral4}, Part \uppercase\expandafter{\romannumeral2}, \cite{nori1982fundamental}). We know the cohomology group $\mathrm{H}^{1}(X,\mathcal{O}_{X})$ classifies nilpotent bundles of rank $2$ and it is infinite whenever it does not vanish. 
\item It is an interesting question to ask if there are only finitely many isomorphism classes of representations $\pi_{1}^{EF,et}(X,x)\rightarrow \mathrm{GL}_{n}$ for a given $n$, where $\pi_{1}^{EF,et}(X,x)$ is the maximal pro-étale quotient of Nori's fundamental group scheme. Our argument fails terribly: the naive analogues of both Theorem \ref{jordan} and Proposition \ref{lang} are wrong. For the latter, it suffices to consider Artin-Schreier extensions so there can be infinitely many non-isomorphic separable extensions of degree $p$. 
\item Nevertheless, for a global field or a local field $K$, there are only finitely many isomorphism classes of representations $\pi_{1}^{EF,p'}(X,x)\rightarrow \mathrm{GL}_{n}$ for a given $n$, where $\pi_{1}^{EF,p'}(X,x)$ is the maximal prime-to-$p$ quotient (which is automatically pro-étale) of Nori's fundamental group scheme. This can be proved using the same argument as that from \cite{10.1215/S0012-7094-03-11723-8}.
\end{enumerate}
\end{rmk}
\begin{rmk}
Let $K$ be an arbitrary field of characteristic $0$ and $X$ be a geometrically connected smooth proper curve over $K$. It is well-known that essentially finite bundles are semistable of degree $0$. We find an another interesting result ( \cite{ghiasabadi2023essentially}\cite{olsson2023moduli}) concerning the non-density of essentially finite bundles in the moduli of semistable bundles: assuming $X$ has genus $\geq 2$ and $n\geq2$, essentially finite bundles of rank $n$ are not Zariski dense in the moduli space of semistable bundles of rank $n$ and of degree $0$. 
\end{rmk}
 \bibliographystyle{plain}
 \bibliography{Finiteness_FiniteBundle}
\end{document}